\newtheorem{thm}{Theorem}[section]
\newtheorem{lem}[thm]{Lemma}
\newtheorem{defi}[thm]{Definition}
\newtheorem{rem}[thm]{Remark}
\newtheorem{prop}[thm]{Proposition}
\newcommand{\GL}{\mathrm{GL}}
\newcommand{\PGL}{\mathrm{PGL}}
\newcommand{\Ind}{{\rm Ind}}
\DeclareMathOperator{\Image}{im}
\DeclareMathOperator{\Gal}{Gal}
\DeclareMathOperator{\Frob}{Frob}
\DeclareMathOperator{\tr}{tr}
\newcommand{\mat}[4]{
\left(  \begin{smallmatrix} #1 & #2 \\ #3 & #4 \end{smallmatrix} \right)}
\newcommand{\cO}{\mathcal{O}}
\newcommand{\fa}{\mathfrak{a}}
\newcommand{\fb}{\mathfrak{b}}
\newcommand{\fc}{\mathfrak{c}}
\newcommand{\ff}{\mathfrak{f}}
\newcommand{\fm}{\mathfrak{m}}
\newcommand{\fn}{\mathfrak{n}}
\newcommand{\fp}{\mathfrak{p}}
\newcommand{\CC}{\mathbb{C}}
\newcommand{\QQ}{\mathbb{Q}}
\newcommand{\RR}{\mathbb{R}}
\newcommand{\ZZ}{\mathbb{Z}}
\begin{document}

\title{Köhler's Conjecture on Hecke Theta Series of Weight One}
\author{Mahima Kumar, Gabor Wiese}

\maketitle

\begin{abstract}
In this article, we prove a conjecture of Günter Köhler on the ambiguity of the quadratic field in the definition of Hecke theta series by deriving it from a similar statement on two-dimensional Galois representations induced from characters of quadratic fields.

MSC Classification: 11F80 (Galois representations), 11F11 (Holomorphic modular forms of integral weight), 11R37 (Class field theory)
\end{abstract}

\section{Introduction} \label{sec:intro}

By definition, a Hecke theta series is associated with a Hecke character of a quadratic field.
It is natural to ask under which conditions the quadratic field is uniquely determined by the theta series.
This question is addressed in a conjecture by Günter Köhler in his book (\cite[Conjecture~5.5]{Koehler-book}) and, in a more precise form, in the subsequent article~\cite{Koehler-article}.

In this paper we prove a slightly modified version of Köhler's conjecture (see Theorem~\ref{thm:conj}).
We derive it from a completely analogous question on $2$-dimensional representations of a group that are inductions of a character of an index-$2$ subgroup (see Proposition~\ref{prop:rep}). This group theoretic result is well known (e.g. \cite[Propositions 9, 10]{R-AlmAb}, \cite[Proposition 2.3]{SchmidtTurki}, where the representations are called triply monomial and triply imprimitive, respectively).
Our presentation is independent, very explicit with concrete matrix realisations and follows the structure of Köhler's conjecture.

We also determine all possible images of $2$-dimensional group representations that can be induced by characters of two different index-$2$ subgroups (see Proposition~\ref{prop:image}). This provides a different perspective on the list of almost abelian $2$-groups with a faithful $2$-dimensional representation from \cite[Proposition~18]{R-AlmAb}.

\subsection*{Acknowledgments}
The authors would like to thank David Rohrlich for having shared the article~\cite{R-AlmAb}.
The second-named author would like to thank Kathrin Bringmann for having brought Köhler's conjecture to his attention many years ago.

\section{Representation theoretic version of the conjecture}
This section contains the representation theoretic core of the paper. We start by introducing some notation and collecting the facts about induced representations that will be used in the sequel.

For a finite group $G$, a normal subgroup $H \subseteq G$, a character $\chi: H \to \CC^\times$ and $\sigma \in G$, we define the conjugated character $\chi^\sigma: H \to \CC^\times$ by setting $\chi^\sigma(h) = \chi(\sigma^{-1}h\sigma)$ for $h \in H$.
We now specialise to index~$2$ and write $G = H \sqcup \sigma H$. Then $\chi^\sigma$ does not depend on the choice of $\sigma \in G \setminus H$ and, for a suitable choice of basis, the induced representation $\rho = \Ind_H^G(\chi)$ can be explicitly described by
\begin{equation}\label{eq:induced}
\rho(g) = \begin{cases}
\mat {\chi(h)}00{\chi^\sigma(h)} & \textnormal{ if } g=h \in H,\\[.3cm]
\mat 0 {\chi(\sigma^2)\chi^\sigma(h)}{\chi(h)}0 & \textnormal{ if } g=\sigma h \notin H.
\end{cases}
\end{equation}
By Mackey's criterion, $\rho$ is irreducible if and only if $\chi \neq \chi^\sigma$.

In order to ease notation, we shall now without loss of generality assume that $\rho$ is faithful, allowing us to see $G$ inside $\GL_2(\CC)$ via~$\rho$.
We define the character $\epsilon_\chi:H \to \CC^\times$ as $\epsilon_\chi = \chi^\sigma / \chi = \det(\rho)|_H/\chi^2$.
Then $\rho$ is irreducible if and only if $\epsilon_\chi$ is not the trivial character.
Let $K = \ker(\epsilon_\chi) \subseteq H$ be its kernel. If $\epsilon_\chi$ is non-trivial, then $K = Z(G)$ is the centre of~$G$.

\begin{prop}\label{prop:rep}
The following statements are equivalent.
\begin{enumerate}[(A')]
\item \label{prop:rep:A} There exist a subgroup $H' \subset G$ of index~$2$ different from $H$ and a character $\chi': H' \to \CC^\times$ such that $\rho \cong \Ind_{H'}^G (\chi')$.
\item \label{prop:rep:B} $\rho$ is irreducible and for all $h\in H$ we have $\chi(h)=\chi^\sigma(h)$ or $\chi(h)=-\chi^\sigma(h)$.
\item \label{prop:rep:C} $\epsilon_\chi$ is quadratic.
\item \label{prop:rep:D} $\tr(\rho(h)) = 0$ for all $h \in H \setminus K$.
\item \label{prop:rep:E} $G/K \cong C_2 \times C_2$.
\item \label{prop:rep:F} There exist two subgroups $H' \subset G$ of index~$2$ different from $H$ and characters $\chi': H' \to \CC^\times$ such that $\rho \cong \Ind_{H'}^G (\chi')$.
\end{enumerate}
\end{prop}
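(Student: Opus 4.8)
The plan is to prove the six statements equivalent by establishing the cycle
\ref{prop:rep:A} $\Rightarrow$ \ref{prop:rep:C} $\Leftrightarrow$ \ref{prop:rep:B} $\Leftrightarrow$ \ref{prop:rep:D} $\Leftrightarrow$ \ref{prop:rep:E} $\Rightarrow$ \ref{prop:rep:F} $\Rightarrow$ \ref{prop:rep:A}, the last implication being trivial since two subgroups in particular provide one. First I would record two bookkeeping facts read off from~\eqref{eq:induced}: that the character of $\rho$ satisfies $\tr(\rho(h)) = \chi(h)\bigl(1 + \epsilon_\chi(h)\bigr)$ for $h \in H$ while $\tr(\rho(g)) = 0$ for $g \notin H$, and that $\epsilon_\chi^\sigma = \epsilon_\chi^{-1}$, so that $K = \ker(\epsilon_\chi)$ is stable under conjugation by $\sigma$ and hence normal in $G$, making $G/K$ a group. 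Throughout one keeps track of the nontriviality of $\epsilon_\chi$ (equivalently, the irreducibility of $\rho$); the degenerate case $\epsilon_\chi = 1$, in which $\rho$ reduces and $G$ becomes abelian, is dispatched separately.

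The block \ref{prop:rep:B} $\Leftrightarrow$ \ref{prop:rep:C} $\Leftrightarrow$ \ref{prop:rep:D} $\Leftrightarrow$ \ref{prop:rep:E} is then essentially formal. The condition in~\ref{prop:rep:B} reads $\epsilon_\chi(h) \in \{1,-1\}$ for all $h$, i.e. $\epsilon_\chi^2 = 1$, and its irreducibility clause is precisely the nontriviality of $\epsilon_\chi$; together these say $\epsilon_\chi$ has order exactly $2$, which is~\ref{prop:rep:C}. For \ref{prop:rep:C} $\Leftrightarrow$ \ref{prop:rep:D} I use the trace formula: on $H \setminus K$ one has $\epsilon_\chi(h) \neq 1$, so $\tr(\rho(h)) = 0$ is equivalent to $\epsilon_\chi(h) = -1$, and quadraticity is exactly what forces $\epsilon_\chi$ to take the value $-1$ on the (then nonempty) coset $H \setminus K$. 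For \ref{prop:rep:C} $\Leftrightarrow$ \ref{prop:rep:E}, quadraticity gives $[H:K] = 2$ and hence $[G:K] = 4$; since $K = Z(G)$, the quotient $G/K$ cannot be cyclic (a group cyclic over its centre is abelian, contradicting that $\rho$ is faithful, irreducible and two-dimensional), so $G/K \cong C_2 \times C_2$, and the converse simply reverses the index count.

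For \ref{prop:rep:A} $\Rightarrow$ \ref{prop:rep:C} I would argue with the character. If $\rho \cong \Ind_{H'}^G(\chi')$ with $H' \neq H$, then $H$ and $H'$ are distinct index-$2$ (hence normal) subgroups, so $H \cap H'$ has index $4$ and $H \setminus H'$ is the nontrivial coset of $H \cap H'$ in $H$. The character of $\rho$ vanishes on $G \setminus H$ and, by the same computation applied to $H'$, on $G \setminus H'$, hence on all of $H \setminus H'$. The trace formula then forces $\epsilon_\chi(h) = -1$ for every $h \in H \setminus H'$, whence $K = H \cap H'$ and $\epsilon_\chi$ is quadratic.

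The heart of the argument, and the step I expect to be the main obstacle, is \ref{prop:rep:E} $\Rightarrow$ \ref{prop:rep:F}: actually producing the two new characters. Once $G/K \cong C_2 \times C_2$ there are exactly three index-$2$ subgroups containing $K$, namely $H$ and two others $H'$, $H''$, and these are the only possible sources: if $\rho$ is induced from an index-$2$ subgroup $L$, then $Z(G)$ maps to scalars under $\rho$ while every element of $G \setminus L$ has trace zero, forcing $K = Z(G) \subseteq L$. For each of $H'$ and $H''$ I note that it contains $Z(G)$ with quotient $C_2$ and is therefore abelian, so $\rho$ restricts to a sum $\psi_1 \oplus \psi_2$ of two characters; these are distinct, since equality would make $\rho(H')$ scalar and force $H' \subseteq Z(G) = K$. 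Frobenius reciprocity together with a dimension count then identifies $\rho$ with $\Ind_{H'}^G(\psi_1)$, and likewise for $H''$, which yields~\ref{prop:rep:F}. As an explicit alternative in the spirit of the paper, one may instead conjugate~\eqref{eq:induced} by a fixed matrix diagonalising the antidiagonal involution attached to $H'$ and read off $\chi'$ as the upper-left entry; the main care there is to verify that the conjugated representation again takes the shape~\eqref{eq:induced} relative to $H'$.
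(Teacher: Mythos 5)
Your proof is correct, and although it runs the same cycle as the paper --- the block (\ref{prop:rep:B}')\,$\Leftrightarrow$\,(\ref{prop:rep:C}')\,$\Leftrightarrow$\,(\ref{prop:rep:D}')\,$\Leftrightarrow$\,(\ref{prop:rep:E}'), then (\ref{prop:rep:E}')\,$\Rightarrow$\,(\ref{prop:rep:F}')\,$\Rightarrow$\,(\ref{prop:rep:A}')\,$\Rightarrow$\,(\ref{prop:rep:C}') --- your three substantive implications are argued genuinely differently. For (\ref{prop:rep:C}')\,$\Rightarrow$\,(\ref{prop:rep:E}') the paper computes the kernel and image of $\Ind_H^G(\epsilon_\chi)$ explicitly, whereas you use $K = Z(G)$ together with the fact that $G/Z(G)$ cyclic forces $G$ abelian; both work. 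For (\ref{prop:rep:E}')\,$\Rightarrow$\,(\ref{prop:rep:F}'), the paper constructs $\chi'$ by hand (a square root $s$ of $\chi(\sigma'^2)$, verification of the character property using centrality of~$K$, then a case-by-case comparison of traces), while you observe that $H'$ is abelian (central $K$ with quotient $C_2$), decompose $\rho|_{H'} = \psi_1 \oplus \psi_2$ with $\psi_1 \neq \psi_2$, and conclude $\rho \cong \Ind_{H'}^G(\psi_1)$ by Frobenius reciprocity and a dimension count; this is shorter and less computational, at the cost of invoking Frobenius reciprocity where the paper stays self-contained with matrices. For (\ref{prop:rep:A}')\,$\Rightarrow$\,(\ref{prop:rep:C}') the paper restricts $\rho$ to $H \cap H'$ and argues via Maschke plus conjugation computations, whereas you get $\epsilon_\chi = -1$ on $H \setminus H'$ from the vanishing of $\tr(\rho)$ off both $H$ and $H'$, and then $\epsilon_\chi = 1$ on $H \cap H'$ comes for free; your version is cleaner, though you should spell out the one-line multiplicativity step: for $k \in H \cap H'$ and $h \in H \setminus H'$ one has $hk \in H \setminus H'$, so $-1 = \epsilon_\chi(hk) = \epsilon_\chi(h)\epsilon_\chi(k) = -\epsilon_\chi(k)$.

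One caveat: you say the degenerate case $\epsilon_\chi = 1$ is ``dispatched separately'', but it cannot be fully dispatched. In that case $K = H$, so (\ref{prop:rep:D}') is vacuously true while (\ref{prop:rep:A}'), (\ref{prop:rep:B}'), (\ref{prop:rep:C}'), (\ref{prop:rep:E}') and (\ref{prop:rep:F}') are all false; the equivalence therefore genuinely fails there, and the proposition must be read with nontriviality of $\epsilon_\chi$ (equivalently, irreducibility of $\rho$) as a standing assumption. This is a defect of the statement rather than of your argument --- the paper glosses over exactly the same point when it declares (\ref{prop:rep:B}')\,$\Leftrightarrow$\,(\ref{prop:rep:C}')\,$\Leftrightarrow$\,(\ref{prop:rep:D}') ``clear'' --- and noticing that the case needs attention is to your credit; just replace ``dispatched'' by the observation that it must be excluded.
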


\begin{proof}
\underline{(\ref{prop:rep:B}') $\Leftrightarrow$ (\ref{prop:rep:C}') $\Leftrightarrow$ (\ref{prop:rep:D}'):} Clear from the previous discussion, the definition of $\epsilon_\chi$ and the formula $\tr(\rho(h)) = \chi(h) + \chi^\sigma(h)$, following from~\eqref{eq:induced}.

\underline{(\ref{prop:rep:C}') $\Rightarrow$ (\ref{prop:rep:E}'):} From \eqref{eq:induced} it is clear that the representation $\Ind_H^G(\epsilon_\chi)$ has kernel~$K$ and image isomorphic to $C_2 \times C_2$.

\underline{(\ref{prop:rep:E}') $\Rightarrow$ (\ref{prop:rep:C}'):} As $H/K$ is isomorphic to $C_2$ and $K$ is $\ker(\epsilon_\chi)$, the character $\epsilon_\chi$ has to be quadratic.

\underline{(\ref{prop:rep:E}') $\Rightarrow$ (\ref{prop:rep:F}'):} There are two subgroups of $G/K \cong C_2 \times C_2$ of order~$2$ different from $H/K$, their preimages under the projection $G \twoheadrightarrow G/K$ giving the two claimed subgroups of~$G$ of index~$2$ different from~$H$.
Let $H'$ be one of them and let $\sigma' \in H' \setminus H$. As $\sigma'^2 \in K \subset H$, we can further choose $s \in \CC$ to be a square root of $\chi(\sigma'^2)$. This allows us to define a map $\chi': H' \to \CC^\times$ by setting $\chi'(k) = \chi(k)$ and $\chi'(\sigma' k) = s \cdot \chi(k)$. It is now easily checked that $\chi'$ is a character, using that $K$ is the centre of~$G$ due to the implication (\ref{prop:rep:E}') $\Rightarrow$ (\ref{prop:rep:C}') and the discussion preceding this proposition. The only non-trivial checks are the following ones where $k,k' \in K$: $\chi'(k \cdot \sigma' k') = \chi'(\sigma' k k') = s \cdot \chi(k k') = s \cdot \chi(k) \cdot \chi(k') = \chi'(\sigma' k) \cdot \chi'(k')$ and
$\chi'(\sigma' k \cdot \sigma' k') = \chi'(\sigma'^2 k k') = \chi(\sigma'^2) \cdot \chi(k) \cdot \chi(k') = s^2 \cdot \chi(k) \cdot \chi(k') = \chi'(\sigma' k) \cdot \chi'(\sigma'k')$.

Let $\tau \in H \setminus H'$. Then $\chi'^\tau(k) = \chi'(\tau^{-1} k \tau) = \chi(\tau^{-1} k \tau) = \chi(\tau^{-1})\cdot  \chi(k)\cdot  \chi(\tau) = \chi(k)$ for all $k \in K$.
Furthermore, we have
$\chi'^\tau(\sigma')^{-1} \cdot \chi'(\sigma') = \chi'(\tau^{-1}\sigma'^{-1}\tau\sigma') = \chi(\tau^{-1}\cdot\sigma'^{-1}\tau\sigma') = \chi(\tau^{-1}) \cdot \chi^{\sigma'}(\tau) = \epsilon_\chi(\tau) = -1$, as $\tau^{-1}\sigma'^{-1}\tau\sigma' \in K$ since $G/K$ is commutative.

Let $\rho' = \Ind_{H'}^G(\chi')$. We check $\tr(\rho(g))=\tr(\rho'(g))$ for all $g \in G$ by distinguishing cases.
If $g=k \in K$, then the previous computations imply $\tr(\rho'(k)) = \chi'(k) + \chi'^\tau(k) = \chi(k) + \chi^\sigma(k) = \tr(\rho(k))$ for $k \in K$. If $g \in H \setminus H'$, then $\tr(\rho'(g))=0$ and also $\tr(\rho(g))=\chi(g)+\chi^\sigma(g)=0$.
Similarly, if $g \in H' \setminus H$, then $\tr(\rho(g))=0$ and also $\tr(\rho'(g))=\chi'(g)+\chi'^\tau(g)=0$.
Finally, if $g \in G \setminus(H \cup H')$, then $\tr(\rho(g))=\tr(\rho'(g))=0$.
Consequently, $\rho$ and $\rho'$ are irreducible with the same character, whence they are isomorphic.

\underline{(\ref{prop:rep:F}') $\Rightarrow$ (\ref{prop:rep:A}'):} Trivial.

\underline{(\ref{prop:rep:A}') $\Rightarrow$ (\ref{prop:rep:C}'):}
Let $\tau \in H \setminus H'$ and $\sigma' \in H' \setminus H$.
Then we find $\tr(\rho(\tau))=0=\chi(\tau) + \chi^\sigma(\tau)$, whence $\epsilon_\chi(\tau) = -1$.
Restricting~$\rho$ to~$K$, we find $\chi|_K \oplus \chi^{\sigma'}|_K \cong \chi'|_K \oplus \chi'^\tau|_K$ and so by Maschke's theorem $\chi|_K \cong \chi'|_K$ or $\chi|_K \cong \chi'^\tau|_K$.
In the former case, the computation $\chi^{\sigma'}(k) = \chi(\sigma'^{-1} k \sigma') = \chi'(\sigma'^{-1} k \sigma') = \chi'(k) = \chi(k)$ for $k \in K$ shows $\chi^{\sigma'}|_K = \chi|_K$.
In the latter case, we adapt it to $\chi^{\sigma'}(k) = \chi(\sigma'^{-1} k \sigma') = \chi'^\tau(\sigma'^{-1} k \sigma') = \chi'^\tau(k) = \chi(k)$ for $k \in K$ and reach the same conclusion. Consequently, $\epsilon_\chi(k)=1$ for all $k \in K$, showing that $\epsilon_\chi$ is quadratic.
\end{proof}

Suppose that $\rho$ satisfies the equivalent conditions of Proposition~\ref{prop:rep} and denote by $\chi_K$ the restriction to~$K$ of $\chi$, which is independent of the choice of index~$2$ subgroup of~$G$. Then we have
\begin{equation}\label{eq:trdet}
\tr(\rho(g))= \begin{cases}
2 \chi_K(g) & \textnormal{if }g \in K, \\
0           & \textnormal{if }g \notin K,
\end{cases} \textnormal{ and }
\det(\rho(g))= \begin{cases}
 \chi_K(g^2) & \textnormal{if }g \in K, \\
-\chi_K(g^2) & \textnormal{if }g \notin K.
\end{cases}
\end{equation}

We include here an explicit description of the possible representations $\rho$, satisfying the equivalent statements of Proposition~\ref{prop:rep} and determine their images. We let $H=H_1, H_2, H_3$ be the three subgroups of~$G$ of index~$2$ and $\chi_i:H_i \to \CC^\times$ the characters such that $\rho \cong \Ind_{H_i}^G(\chi_i)$ for $i=1,2,3$. Further, we choose $\sigma_i \in H_i \setminus K$ for $i=1,2,3$ with $\sigma_3 = \sigma_1\sigma_2$. Then we have the coset decomposition $G = K \sqcup \sigma_1 K \sqcup \sigma_2 K \sqcup \sigma_1\sigma_2 K$, and the consequence that $G$ is generated by $K$ and any two of $\sigma_1,\sigma_2, \sigma_1\sigma_2=\sigma_3$. For the choice $H=H_1$ and $\sigma=\sigma_2$, we obtain from \eqref{eq:induced} for any $k \in K$:
\begin{align*}
\rho(k) &= \mat {\chi_K(k)}00{\chi_K(k)}, & \rho(\sigma_1 k) &= \mat {\chi_1(\sigma_1)\chi_K(k)}00{-\chi_1(\sigma_1)\chi_K(k)},\\
\rho(\sigma_2 k) &= \mat 0{\chi_K(\sigma_2^2)\chi_K(k)}{\chi_K(k)}0, & \rho(\sigma_1 \sigma_2 k) &= \mat 0{\chi_1(\sigma_1)\chi_K(\sigma_2^2)\chi_K(k)}{-\chi_1(\sigma_1)\chi_K(k)}0.
\end{align*}
We immediately see $\ker(\rho) = \ker(\chi_K)$. For the sake of easing notation, we replace $G$ by $G/\ker(\rho) \cong \Image(\rho)$ and $K$ by $K/\ker(\chi_K) \cong \Image(\chi_K)$, making $\rho$ and $\chi_K$ faithful representations. From the matrix description, we further deduce that $K$ lies in the centre of~$G$ and hence is cyclic $K \cong C_n$ of order~$n=2^r m$ with $m$ odd and $r \ge 1$ because $\chi_K$ takes the value~$-1$. Consequently, the image of the projectivisation of~$\rho$ (i.e.\ its image in $\PGL_2(\CC)$) is isomorphic to $G/K \cong C_2 \times C_2$. Moreover, we see that the $H_i$ are abelian of order $2n$, whence they are either cyclic $C_{2n}$ or isomorphic to $C_2 \times K$.
By possibly replacing $\sigma_i$ by an appropriate substitute in $\sigma_i K$, we assume that $\sigma_i$ has order~$2^{r+1}$ in the former case, and is an involution in the latter case. Write $\zeta_q = \exp(2 \pi i /q)$ for $q \in \ZZ$.
Then we get the following explicit set of generators for $H_1$ and $H_2$.

\noindent
\begin{tabular}{|c|c|c|}
\hline
Group & $C_{2n} \cong C_{2^{r+1}} \times C_m$ & $C_2 \times C_n \cong C_2 \times C_{2^r} \times C_m$ \\
\hline
\hline
$H_1$  & $c_1 = \rho(\sigma_1) = \mat {\zeta_{2^{r+1}}}00{-\zeta_{2^{r+1}}}, k' = \mat {\zeta_m}00{\zeta_m}$ & $\iota_1 = \rho(\sigma_1) = \mat 100{-1}, k= \mat {\zeta_{2^r}}00{\zeta_{2^r}}, k'$   \\
\hline
$H_2$  & $c_2 = \rho(\sigma_2) = \mat 0{\zeta_{2^r}}10, k'= \mat {\zeta_m}00{\zeta_m}$                & $\iota_2 = \rho(\sigma_2) = \mat 0110, k= \mat {\zeta_{2^r}}00{\zeta_{2^r}}, k'$ \\
\hline
\end{tabular}\\

In order to ease notation further, we write $K' = \langle k' \rangle \cong C_m$ and notice $H_i = \tilde{H}_i \times K'$ for $i=1,2,3$ and groups $\tilde{H}_i$. Consequently, we have $G = \tilde{G} \times K'$ for some group $\tilde{G}$. In order to compute $G$ and $H_3$, it thus suffices to determine $\tilde{G}$ and $\tilde{H}_3$ for given $\tilde{H}_i$ for $i=1,2$, which we do in the following proposition, which is proved by simple matrix calculations. For the naming of the groups, we follow Tim Dokchitser's page on GroupNames (\cite{GroupNames}).

\begin{prop}\label{prop:image}
We suppose that $\rho$ satisfies the equivalent conditions of Proposition~\ref{prop:rep}. Then the image of $\rho$ is isomorphic to $\tilde{G} \times C_m$ for some odd integer $m \ge 1$, where $\tilde{G}$ is a $2$-group of order $2^{r+2}$ with $r \ge 1$ containing three distinct index~$2$ subgroups $\tilde{H}_i$ with $\tilde{H}_i$ being cyclically generated by $c_i$ of order $2^{r+1}$ or generated by an involution $\iota_i$ together with a central element $k$ of order $2^r$, for $i=1,2,3$.

More precisely, following the notation introduced above, in the four cases corresponding to the choices of generators for $\tilde{H}_1$ and $\tilde{H}_2$, the groups $\tilde{H}_3$ and $\tilde{G}$ are explicitly given as follows.\\
\noindent
\begin{tabular}{|c||c|c|c||c|c|}
\hline
 & $r$ & $\tilde{H}_1$ & $\tilde{H}_2$ & Generator(s) of $\tilde{H}_3$ & Presentation of $\tilde{G}$ \\
\hline
\hline
$1$ & $ 1$    & $\iota_1,k$ & $\iota_2, k$ & $c_3 = \iota_1 \iota_2 = \mat 01{-1}0$ & $\langle \iota_1, c_3 \;|\; 1=\iota_1^2 = c_3^4 = (\iota_1 c_3)^2 \rangle$ \\
\hline
$2$ & $\ge 2$ & $\iota_1,k$ & $\iota_2, k$ & $\iota_3 = \iota_1 \iota_2 k^{2^{r-2}}= \mat 0{\zeta_4}{-\zeta_4}0, k$ & \begin{minipage}{6cm}\vspace*{0.1cm} $\langle \iota_1, \iota_3, k \;|\; 1=\iota_1^2 = \iota_3^2 = k^{2^r} = [\iota_1,\iota_3] k^{2^{r-1}} = [\iota_1,k] = [\iota_3,k]\rangle$ \end{minipage}  \\
\hline
$3$ & $ 1$    & $c_1$ & $c_2$ & $\iota_3 = c_1 c_2 = \mat 0{\zeta_4}{-\zeta_4}0, k$ & $\langle c_1,c_2 \;|\; 1=c_1^4=c_1^2c_2^{-2}= c_2 c_1 c_2^{-1}c_1\rangle$ \\
\hline
$4$ & $\ge 2$ & $c_1$ & $c_2$ & \begin{minipage}{3.6cm}\vspace*{0.1cm} $\iota_3=c_1 c_2 k^{2^{r-2}-1}= \zeta_4 \mat 0{\zeta_{2^{r+1}}}{-\zeta_{2^{r+1}}^{-1}}0, k$ \end{minipage} & $\langle c_2, \iota_3 \;|\; 1=c_2^{2^{r+1}}=\iota_3^2=\iota_3 c_2 \iota_3 c_2^{2^r-1}  \rangle$ \\
\hline
$5$ & $ \ge 1$ & $c_1$ & $\iota_2$  & $c_3 = c_1 \iota_2 = \mat 0{\zeta_{2^{r+1}}}{-\zeta_{2^{r+1}}}0 $ & $\langle c_1, \iota_2 \;|\; 1=c_1^{2^{r+1}}=\iota_2^2=\iota_2 c_1 \iota_2 c_1^{2^r-1}\rangle$ \\
\hline
$6$ & $ \ge 1$ & $\iota_1$ & $c_2$   & $c_3 = \iota_1 c_2 = \mat 0{\zeta_{2^r}}{-1}0$ & $\langle \iota_1, c_2 \;|\; 1=c_2^{2^{r+1}}=\iota_1^2=\iota_1 c_2 \iota_1 c_2^{2^r-1}\rangle$ \\
\hline
\end{tabular}\\
In line~$1$, the group $\tilde{G}$ is isomorphic to the dihedral group~$D_4$, in line~$2$ to the central product $D_4 \circ C_{2^r}$, in line~$3$ to the quaternion group~$Q_8$, and in lines $4-6$ to the modular maximal-cyclic group $M_{r+2}(2)$.
\end{prop}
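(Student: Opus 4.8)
The plan is to read off the structural assertions directly from the set-up preceding the statement, and to reduce everything else to explicit $2\times 2$ matrix computations organised by the four type-combinations for the pair $(\tilde H_1,\tilde H_2)$. The isomorphism $\Image(\rho)\cong\tilde G\times C_m$, the equality $|\tilde G|=|G|/m=2^{r+2}$, the centrality of $\tilde K=\langle k\rangle\cong C_{2^r}$ with $\tilde G/\tilde K\cong C_2\times C_2$, and the fact that each of the three index~$2$ subgroups $\tilde H_i$ is abelian of order $2^{r+1}$ and hence either cyclic $C_{2^{r+1}}$ or of the shape $\langle\iota_i\rangle\times\langle k\rangle\cong C_2\times C_{2^r}$, are all contained in that discussion. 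Thus the only genuinely new content is, case by case, the identification of the third subgroup $\tilde H_3$, a presentation of $\tilde G=\langle\tilde H_1,\tilde H_2\rangle$, and its name.

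First I would determine $\tilde H_3$. In each case I form the product $\sigma_3=\sigma_1\sigma_2$ of the chosen generators $\sigma_i\in\{c_i,\iota_i\}$, write it as an explicit matrix, and square it; one always obtains a scalar matrix equal to a power of $k=\zeta_{2^r}I$ up to sign, from which the order of $\sigma_3$ is immediate. In the two same-type cases the relevant correction is a half-power $k^{2^{r-2}}$, which is integral precisely when $r\ge 2$: for such $r$ it trims $\sigma_3$ to an involution $\iota_3$ and makes $\tilde H_3=\langle\iota_3,k\rangle$ involution-type (rows~$2$ and~$4$), whereas for $r=1$ no correction is available, the bare product already has order $4=2^{r+1}$, and $\tilde H_3$ is cyclic (rows~$1$ and~$3$). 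In the two mixed cases the bare product $c_3=\sigma_1\sigma_2$ generates a cyclic $\tilde H_3\cong C_{2^{r+1}}$ once $r\ge 2$ (rows~$5$ and~$6$). I would emphasise here that the degenerate value $r=1$, where $k=-I$ is itself the central involution and the half-powers collapse, has to be treated separately throughout.

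Next I would pin down the presentations. For each row one checks by matrix multiplication that the stated generators satisfy the listed relations; for example $\iota_2 c_1\iota_2=-c_1$ together with $c_1^{2^r}=-I$ gives $\iota_2 c_1\iota_2=c_1^{2^r+1}$, the defining relation of row~$5$, and $\iota_1\iota_3\iota_1\iota_3=-I=k^{2^{r-1}}$ gives the commutator relation of row~$2$. This yields a surjection from the abstract group defined by the presentation onto $\tilde G$. To upgrade it to an isomorphism I would bound the order of the presented group above by $2^{r+2}$ through a normal-form argument — every word rewrites as $x^iy^j$ (respectively $\iota_1^a\iota_3^bk^c$) with $i,j$ (respectively $a,b,c$) in explicit ranges — so that, $|\tilde G|$ being $2^{r+2}$, the surjection must be bijective. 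Matching against standard presentations then gives the names: row~$1$ is the dihedral group $D_4$, row~$3$ (with $c_1^2=c_2^2$ and $c_2c_1c_2^{-1}=c_1^{-1}$) is $Q_8$, rows~$4$--$6$ are the modular maximal-cyclic group $M_{r+2}(2)=\langle a,b\mid a^{2^{r+1}}=b^2=1,\ bab^{-1}=a^{2^r+1}\rangle$, and row~$2$ is the central product $D_4\circ C_{2^r}$ formed by identifying the central involution of $D_4$ with $k^{2^{r-1}}$.

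As a consistency check I would compare the triple of subgroup types produced in each row against the census of index~$2$ subgroups of the candidate group: $Q_8$ has three cyclic ones, $D_4$ one cyclic and two involution-type, $M_{r+2}(2)$ (for $r\ge 2$) two cyclic and one involution-type, and $D_4\circ C_{2^r}$ three involution-type; the coincidence $M_3(2)\cong D_4$ accounts cleanly for the $r=1$ instances of rows~$5$ and~$6$. The main obstacle is the upper bound in the order count for $D_4\circ C_{2^r}$ and $M_{r+2}(2)$ — i.e.\ ruling out that the relations present a strictly larger group — in tandem with the disciplined separation of the $r=1$ degeneracy from the generic case; with those settled, the remaining verifications are routine $2\times 2$ matrix arithmetic.
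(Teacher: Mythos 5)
Your proposal is correct and takes essentially the same route as the paper: the paper offers no proof beyond the remark that the proposition ``is proved by simple matrix calculations'' from the generators fixed in the preceding discussion, and your plan (structural claims read off from that discussion, case-by-case products and squares of the explicit matrices, relation checking plus a normal-form order bound to validate each presentation) is exactly that calculation written out, with the presentation verification made properly explicit.

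One finding of yours deserves emphasis: your insistence on treating $r=1$ separately is not pedantry, because carried out it shows that the printed table is not quite right in line~$3$. There one computes $c_1c_2 = \mat{0}{-\zeta_4}{-\zeta_4}{0}$, which has determinant~$1$ and order~$4$, so $\tilde{H}_3 = \langle c_1 c_2\rangle \cong C_4$ is cyclic --- exactly as your census predicts, since $Q_8$ has three cyclic index-$2$ subgroups and no subgroup isomorphic to $C_2 \times C_2$. The matrix $\mat{0}{\zeta_4}{-\zeta_4}{0}$ listed in the table has determinant~$-1$ and therefore cannot lie in the image at all (the image is $Q_8 \subset \SL_2(\CC)$); this is also forced by part~(c) of the remark following the proposition, which states that line~$3$ admits no involution of determinant~$-1$. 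Similarly, in lines~$5$ and~$6$ with $r=1$, the element $c_3$ is an involution, so $\tilde{H}_3 = \langle c_3, k\rangle \cong C_2 \times C_2$ rather than the cyclic group generated by $c_3$ alone --- consistent with the coincidence $M_3(2) \cong D_4$ that you and the paper both note. So your argument proves the (implicitly corrected) statement; when writing it up, state these corrections explicitly rather than leaving them buried in the case split.
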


\begin{rem}
\begin{enumerate}[(a)]
\item The group $M_3(2)$ is isomorphic to~$D_4$, whence $D_4$ also occurs in lines $5$ and $6$ for $r=1$.
\item Proposition~\ref{prop:image} can be viewed as a more explicit version of \cite[Proposition~18]{R-AlmAb}, where the naming of the groups differs: $D_4$ is written as $D_8$, the modular maximal-cyclic group $M_{r+2}(2)$ is denoted $N_{2^{r+2}}$, and the central product $D_4 \circ C_{2^r}$ is denoted $DT_{2^{r+2}}$.
\item In all cases there is an involution of determinant~$-1$ except in line~$3$, {\it i.e.} if the image of the representation is isomorphic to $Q_8$. This is indeed the only case when $\det(\rho)$ is the trivial character because non-scalar involutions have determinant~$1$. A similar result has been obtained by Rohrlich (\cite[Proposition~3]{R-Quat}).
\end{enumerate}
\end{rem}

\section{Theta series of weight one and their Galois representations}

In this section, we collect statements about Hecke and Dirichlet characters, their associated theta series and their Galois counterparts, which we will need to pass from the representation theoretic result to a proof of Köhler's conjecture. In particular, we will only treat Dirichlet characters and do not develop the larger theory of Hecke characters.

Throughout this section, let $K$ be a quadratic extension of~$\QQ$.

\subsection{Dirichlet characters}

We base this subsection on \cite[VII, \S 6, especially 6.1, 6.2, 6.8, 6.9]{N-AZT}.

\begin{defi}
Let $\fm$ be an integral ideal of $\cO_K$. We denote by $J(\fm)$ the group of all fractional ideals coprime to $\fm$ and by $P(\fm)$ its subgroup consisting of all principal fractional ideals $(a)$ such that $a \equiv 1 \pmod{\fm}$\footnote{For $a=\frac{b}{c}$ with $b,c \in \cO_K$ coprime to~$\fm$, the congruence $a \equiv 1 \pmod{\fm}$ means $b \equiv c \pmod{\fm}$.} and $a \in K$ is totally positive ({\it i.e.} $\tau(a) > 0$ for all real embeddings $\tau: K \to \RR$).

A \textbf{{\em Dirichlet character of modulus~$\fm$}} is a group homomorphism $\xi \colon J(\fm) \rightarrow \CC^\times$ such that $\xi|_{P(\fm)} = 1$. By convention, for a fractional ideal $\fa$ we set $\xi(\fa)=0$ if $\fa$ and $\fm$ are not coprime.
\end{defi}

Every such Dirichlet character $\xi$ uniquely determines a character $\xi_f: (\cO_K/\fm)^\times \to \CC^\times$ and $p_\tau \in \{0,1\}$ for every field embedding $\tau: K \hookrightarrow \CC$ with $p_\tau = 0$ if $\tau$ is complex, by restricting to principal ideals via the equality
\begin{equation}\label{eq:ptau}
\xi((a)) = \xi_f(a) \cdot \prod_{\tau} \big(\frac{\tau(a)}{|\tau(a)|}\big)^{p_\tau}
\end{equation}
for any $0 \neq a \in \cO_K$, where $\tau$ runs over the embeddings $K \hookrightarrow \CC$.

If $\fn$ is an ideal of $\cO_K$ dividing $\fm$, then by \cite[VII.6.2]{N-AZT} $\xi_f$ factors through $(\cO_K/\fn)^\times$ if and only if $\xi$ can be extended to a Dirichlet character $\tilde{\xi}$ of modulus~$\fn$. The extension is unique because any ideal $\fa \in J(\fn)$ can be written as $\fa = \fb \cdot (a)$ for some $\fb \in J(\fm)$ and $(a) \in P(\fn)$, in which case we have $\tilde{\xi}(\fa) = \xi(\fb) \cdot \tilde{\xi}((a)) = \xi(\fb) \cdot \tilde{\xi}_f(a) \cdot \prod_{\tau} \big(\frac{\tau(a)}{|\tau(a)|}\big)^{p_\tau}$. The $p_\tau$ remain unchanged in the extension, which follows from~\eqref{eq:ptau}.

\begin{defi}
The \textbf{\em conductor} of a Dirichlet character $\xi$ of modulus $\fm$ is defined to be the largest integral ideal $\ff$ of $\cO_K$ dividing $\fm$ such that $\xi_f$ factors through $(\cO_K/\ff)^\times$.
A Dirichlet character $\xi$ of modulus~$\fm$ is called \textbf{{\em primitive}} if $\fm$ is itself the conductor of $\xi$.
\end{defi}

Consequently, any Dirichlet character $\xi$ of modulus $\fm$ extends uniquely to a primitive Dirichlet character $\tilde{\xi}$ for $\ff \mid \fm$.

\subsection{Theta series}

Here, following \cite[\S 5.2]{Koehler-book}, we present the definition of Hecke theta series associated with Dirichlet characters $\xi$ of modulus~$\fm$ for a quadratic extension $K$ of~$\QQ$.
Denoting the norm of an ideal $\fa \subset \cO_K$ by $N(\fa)$, one defines the Hecke theta series corresponding to $\xi$ as
\begin{equation}\label{eq:an}
\Theta_{1}(K, \xi, z)
= \sum_{\fa \subseteq \cO_K}\xi(\fa)e^{2\pi i N(\fa) z} = \sum_{n=1}^{\infty} a_n e^{2\pi i nz},
\textnormal{ where }
a_n = \sum_{\substack{\fa \subseteq \cO_K\\ N(\fa) = n}} \xi(\fa).
\end{equation}

From \eqref{eq:an} and the multiplicativity of the norm and the Dirichlet character~$\xi$, one obtains the following explicit computations of the Fourier coefficients of these Hecke theta series.

\begin{lem}\label{lemma:7}
\begin{enumerate}[(a)]
    \item For primes $p$ in $\ZZ$ inert in~$K$:
    \[
    a_{p^r} = \begin{cases} 0                     & \textnormal{ if } 2 \nmid r, \\
                            \xi(p\cO_K)^{r/2} & \textnormal{ if } 2 \mid r,
              \end{cases}
    \]
    where $r \in \ZZ_{\ge 1}$.

    \item For primes $p$ in $\ZZ$ ramified in~$K$:
    \[
    a_{p^r} = \xi(\fp)^r,
    \]
    where $\fp$ is the unique prime ideal above $p$ in $K$.

    \item For primes $p$ in $\ZZ$ split in~$K$:
    \begin{align*}
    a_p &= \xi(\fp_1) + \xi(\fp_2) & \\
    a_{p^r} &= \xi(\fp_1^r) + \xi(\fp_1^{r-1}\fp_2) + \cdots + \xi(\fp_1\fp_2^{r-1}) +\xi(\fp_2)^r & \textnormal{ for } r \ge 2,
    \end{align*}
    where $\fp_1$ and $\fp_2$ are the distinct prime ideals lying above~$p$, {\em i.e.} $p\cO_K = \fp_1 \fp_2$.
    \item $a_{nm}=a_na_m$ for all $n,m \in \ZZ_{\ge 1}$ such that $\gcd(n,m)=1$.
\end{enumerate}
\end{lem}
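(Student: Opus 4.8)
The plan is to reduce everything to the unique factorisation of ideals in the Dedekind ring $\mathcal{O}_K$ together with the complete multiplicativity of the two functions appearing in~\eqref{eq:an}, namely the ideal norm $N$ and the Dirichlet character $\xi$. For the norm this is the standard identity $N(\fa\fb)=N(\fa)N(\fb)$; for $\xi$ it is the homomorphism property on $J(\fm)$, \emph{extended by the convention} $\xi(\fa)=0$ for $\fa$ not coprime to $\fm$, which makes $\xi(\fa\fb)=\xi(\fa)\xi(\fb)$ hold for \emph{all} nonzero ideals (if either factor fails to be coprime to $\fm$, so does the product, and both sides vanish). I would record these two facts first, since every part of the lemma is an immediate consequence of them.

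For the prime-power parts (a)--(c), I would first observe that any ideal $\fa$ with $N(\fa)=p^r$ is supported only on the primes lying above $p$: each prime divisor $\fq\mid\fa$ satisfies $N(\fq)\mid p^r$, and since $N(\fq)$ is a power of the rational prime below $\fq$, that rational prime must be $p$. Hence enumerating the ideals of norm $p^r$ amounts to writing down the products of the primes above $p$, governed by the splitting type of $p$ in the quadratic field $K$, for which $efg=2$ leaves exactly three cases. If $p$ is inert, there is a single prime $p\mathcal{O}_K$ of norm $p^2$, so only even exponents occur and the unique ideal of norm $p^r$ (with $r$ even) is $(p\mathcal{O}_K)^{r/2}$, giving~(a). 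If $p$ is ramified, $p\mathcal{O}_K=\fp^2$ with $N(\fp)=p$, the unique ideal of norm $p^r$ is $\fp^r$, giving~(b). If $p$ splits as $p\mathcal{O}_K=\fp_1\fp_2$ with $\fp_1\neq\fp_2$ and $N(\fp_i)=p$, the ideals of norm $p^r$ are exactly $\fp_1^{\,i}\fp_2^{\,r-i}$ for $0\le i\le r$, and summing $\xi$ over them yields~(c). In each case the displayed formula drops out after applying multiplicativity of $\xi$ to the factorisation.

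For the multiplicativity statement~(d), I would use that when $\gcd(n,m)=1$ the rational primes lying below the prime divisors of an ideal of norm $n$ are disjoint from those below the divisors of an ideal of norm $m$. Thus unique factorisation gives a bijection between ideals $\fa$ of norm $nm$ and pairs $(\fa_n,\fa_m)$ with $N(\fa_n)=n$ and $N(\fa_m)=m$, via $\fa=\fa_n\fa_m$. Applying $\xi(\fa)=\xi(\fa_n)\xi(\fa_m)$ and summing over all such pairs factors the defining sum of $a_{nm}$ into the product $a_n a_m$.

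I do not expect a serious obstacle, as the lemma is essentially bookkeeping; the only point that genuinely needs care is the behaviour of $\xi$ at ideals sharing a factor with the modulus $\fm$. Concretely, in the split case one of $\fp_1,\fp_2$ may divide $\fm$ while the other does not, so the corresponding terms must vanish; this is exactly what the zero convention guarantees, provided one has first checked that the extended $\xi$ is genuinely completely multiplicative. Verifying that compatibility at the outset is what makes the factorisation-to-product steps in all four parts legitimate without extra case distinctions on coprimality to $\fm$.
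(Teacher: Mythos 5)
Your proof is correct and is exactly the argument the paper has in mind: the paper gives no separate proof of Lemma~\ref{lemma:7}, merely noting that it follows from~\eqref{eq:an} together with the multiplicativity of the norm and of~$\xi$, which is precisely what you carry out (enumerating the ideals of norm $p^r$ by splitting type and using the coprime factorisation for part~(d)). Your explicit check that the zero convention makes $\xi$ completely multiplicative is a worthwhile detail the paper leaves implicit.
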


\begin{thm}\label{thm 2}\cite[Theorems 5.1, 5.3]{Koehler-book}
Let $K$ be a quadratic field with discriminant $D$ and $\xi$ a Dirichlet character of modulus $\fm$ for~$K$ given by $\xi_f$ and $p_\tau \in \{0,1\}$ for all $\tau: K \hookrightarrow \CC$ with $p_\tau = 0$ for complex~$\tau$.

If $K$ is imaginary, assume that $\xi$ is not induced from a Dirichlet character of~$\QQ$ through the norm.

If $K$ is real, assume that exactly one of the two $p_\tau$ equals~$1$.

Then $\Theta_1(K,\xi, z)$ is a cuspidal modular form of weight $1$ for the congruence group $\Gamma_1(|D|N(\fm))$ and a normalised Hecke eigenform, {\em i.e.} a common eigenfunction of all Hecke operators $T_n$ for $n \in \ZZ_{\ge 1}$ with $a_1 = 1$.
Moreover, if $\xi$ is primitive, then $\Theta_1(K,\xi, z)$ is a newform.
\end{thm}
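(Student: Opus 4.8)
The plan is to separate the statement into two independent tasks: first, proving that $\Theta_1(K,\xi,z)$ is a holomorphic cusp form of weight~$1$ on $\Gamma_1(|D|N(\fm))$ (the \emph{modularity}), and second, proving the eigenform and newform assertions (the \emph{arithmetic} of the Fourier coefficients). For the modularity I would realise $\Theta_1$ as a theta series attached to the norm form $N$ on the rank-$2$ lattice $\cO_K \subset K \otimes_\QQ \RR$, twisted by the finite-order data $\xi_f$ and weighted by the archimedean factors $\prod_\tau (\tau(a)/|\tau(a)|)^{p_\tau}$ of \eqref{eq:ptau}. The transformation behaviour under the relevant congruence subgroup then follows from Hecke's theta transformation formula, which is in turn a consequence of Poisson summation applied to $\cO_K$ and its dual lattice (the inverse different, whose covolume brings in the factor $|D|$). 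The degree of the archimedean weight fixes the weight of the resulting form, and tracking the Gauss sums and Poisson constants through the functional equation produces the level $|D|N(\fm)$ together with the correct nebentypus.

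The two hypotheses on $\xi$ correspond to the two ways of obtaining weight~$1$. When $K$ is imaginary quadratic, $N$ is positive definite, both $p_\tau=0$, and a binary positive-definite theta series is automatically of weight $2/2=1$; the twist by $\xi_f$ amounts to summing over a fixed ray class with character weights. When $K$ is real quadratic, $N$ is indefinite of signature $(1,1)$ and the naive theta series is neither holomorphic nor of finite weight; here the requirement that exactly one $p_\tau$ equal $1$ inserts a single sign factor $\operatorname{sgn}(\tau(a))$ that selects a definite direction and yields, after the transformation analysis, a holomorphic weight-one form. In both cases cuspidality is where the remaining hypotheses enter: one computes the constant terms at all cusps and shows they vanish precisely because $\xi \neq \xi^\sigma$ (equivalently, because $\xi$ is not the pullback of a Dirichlet character of $\QQ$ under the norm in the imaginary case, while the single odd infinity type forces the vanishing in the real case). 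Were this to fail, $\Theta_1$ would instead be a combination of Eisenstein series.

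Granting modularity, the eigenform statement is essentially a formal consequence of Lemma~\ref{lemma:7}. The identity $a_1 = \xi(\cO_K) = 1$ gives the normalisation, and part~(d) gives multiplicativity $a_{mn}=a_m a_n$ for coprime $m,n$. It then remains to verify the weight-one Hecke recursion $a_{p^{r+1}} = a_p\, a_{p^r} - \varepsilon(p)\, a_{p^{r-1}}$ at every prime, where $\varepsilon$ is the nebentypus; this is a direct case check against the closed forms of parts~(a)--(c) for $p$ inert, ramified and split, the split case being the only one requiring a short induction on~$r$. Since a modular form whose Fourier coefficients are multiplicative and satisfy these recursions is a simultaneous eigenform of all $T_n$ with eigenvalue $a_n$, this shows that $\Theta_1(K,\xi,z)$ is a normalised Hecke eigenform.

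Finally, for the newform assertion I would compute the conductor directly and match it against the level. By the conductor--discriminant formula, the conductor of the two-dimensional induced object attached to $\xi$ is $|D| \cdot N(\ff)$, where $\ff$ is the conductor of $\xi$. If $\xi$ is primitive then $\ff = \fm$, so the conductor equals the level $|D|N(\fm)$ exactly, which is equivalent to $\Theta_1$ being a newform (it cannot then arise by level-raising from a proper divisor of the level). I expect the genuine obstacle to be the modularity in the real quadratic case: establishing holomorphy and the precise weight of the indefinite theta series carrying a single sign factor is the one step that is not formal, whereas the imaginary case reduces to the classical definite binary theta series, and the eigenform and newform properties are book-keeping once modularity and the conductor computation are in hand.
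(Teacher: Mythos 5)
You should first be aware that the paper contains no proof of this statement at all: Theorem~\ref{thm 2} is imported verbatim, with a citation, from \cite[Theorems 5.1, 5.3]{Koehler-book}, and those results in turn rest on Hecke's classical work. So your proposal can only be measured against the classical argument, not against anything in the paper. Measured that way, your overall division into modularity, eigenform bookkeeping, and newform-ness is the right one, and the eigenform part is correct and essentially complete: $a_1=\xi(\cO_K)=1$, multiplicativity is Lemma~\ref{lemma:7}(d), and the recursion $a_{p^{r+1}}=a_p a_{p^r}-\varepsilon(p)a_{p^{r-1}}$ does check out case by case against Lemma~\ref{lemma:7}(a)--(c), with $\varepsilon(p)=0$ at ramified primes and $\varepsilon(p)=-\xi(p\cO_K)$ at inert primes. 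Your remark that in the real case the single odd infinity type forces $\xi\neq\xi^\sigma$ (so cuspidality needs no extra hypothesis there) is also correct.

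However, there are two genuine gaps. The first is the real quadratic case of modularity, which you correctly identify as the non-formal step but for which your proposal contains no workable mechanism. Poisson summation on $\cO_K$ does not apply as stated: $\Theta_1(K,\xi,z)$ is a sum over ideals, and converting it into a lattice sum requires choosing ideal class representatives and quotienting by the infinite unit group; the resulting expression $\sum_a \mathrm{sgn}(\tau(a))\,\xi_f(a)\,e^{2\pi i N(a)z}$ over a rank-$2$ lattice with an \emph{indefinite} norm form is not absolutely convergent in any region where the definite-form theta machinery operates, so there is no ``transformation analysis'' to perform. The classical proof (Hecke) instead establishes the analytic continuation and functional equation of the Hecke $L$-function $L(s,\xi)$ and then invokes a converse theorem; since the level $|D|N(\fm)$ exceeds $1$, this means Weil's converse theorem with twists, a substantially different and heavier route than the one you describe. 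The second gap is the newform assertion: the step ``conductor of the induced representation equals the level, hence $\Theta_1$ is a newform'' presupposes the compatibility between the level of a weight-one newform and the Artin conductor of its attached Galois representation. That compatibility is not available at this point -- it belongs to the Deligne--Serre circle of ideas together with local-global compatibility, and in the dihedral case its standard proof essentially contains the very statement you are trying to prove. The classical argument avoids this circularity by comparing the exact functional equation of $L(s,\xi)$, whose conductor is $|D|N(\ff)$, with the functional equation of the newform underlying $\Theta_1$, whose conductor equals its level by Atkin--Lehner theory; the mismatch rules out $\Theta_1$ being old.
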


\subsection{Galois representations attached to theta series of weight one}\label{sec:galrep}
In this section, we collect the statements from class field theory, which we need to pass between Dirichlet and Galois characters, and construct the Galois representation attached to the theta series from Theorem~\ref{thm 2}.
We continue to work with a quadratic extension $K$ of~$\QQ$. By $\Frob_p$ we denote any choice of Frobenius element at~$p$ in the Galois group of any Galois extension of~$\QQ$ and by $\Frob_\fp$ a Frobenius element at a prime $\fp \subset \cO_K$ in the Galois group of any Galois extension of~$K$.

\begin{thm}[Artin Reciprocity Law] (\cite[III.9.2]{N-KKT}) For an integral ideal $\fm$ of $K$, there exists an abelian extension $K^\fm/K$, called the \textbf{\textit{narrow ray class field of modulus $\fm$} over $K$}, such that there is an isomorphism
\[
\left( \frac{ \left. K^\fm \middle| K \right. }{ \cdot } \right) \colon J(\fm)/P(\fm)\xrightarrow{\sim} \Gal(K^\fm/K); \fa \mapsto \left( \frac{ \left. K^\fm \middle| K \right. }{ \fa} \right)
\]
where $\left( \frac{ \left. K^\fm \middle| K \right. }{\fp} \right)=\Frob_\fp$ for all prime ideals $\fp$ of $K$ coprime to $\fm$.
Here $\left( \frac{ \left. K^\fm \middle| K \right. }{ \fa} \right)$ is the Artin symbol.
\end{thm}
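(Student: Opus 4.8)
The statement is one of the central theorems of class field theory, so I would not attempt a self-contained proof; rather I would derive it, as is done in \cite{N-KKT}, from the idelic formulation of the main theorems. The plan is to realise the narrow ray class group $J(\fm)/P(\fm)$ as an explicit finite quotient of the idele class group $C_K = \AAA_K^\times/K^\times$, and then invoke the existence theorem and the global Artin reciprocity map to identify that quotient with a Galois group in the asserted way.

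First I would fix the open finite-index subgroup $U_\fm \subseteq C_K$ cut out by the congruence conditions at the finite primes dividing $\fm$ together with positivity at the real places of $K$; the word ``narrow'' in the name records precisely that the archimedean places are imposed, matching the total-positivity condition in the definition of $P(\fm)$ given earlier. A standard approximation argument then yields a canonical isomorphism $C_K/U_\fm \xrightarrow{\sim} J(\fm)/P(\fm)$ under which the class of an idele that is a uniformiser at a prime $\fp \nmid \fm$ and trivial elsewhere corresponds to the ideal class of $\fp$.

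Next I would invoke the existence theorem: every open finite-index subgroup of $C_K$ equals the norm subgroup $N_{L/K}C_L$ of a unique finite abelian extension $L/K$. Applying this to $U_\fm$ produces the field $K^\fm$. The global Artin map furnishes an isomorphism $C_K/N_{L/K}C_L \xrightarrow{\sim} \Gal(L/K)$, and its local--global compatibility sends the class of a uniformiser at an unramified prime $\fp$ to $\Frob_\fp$. Transporting this through the identification of the previous paragraph yields exactly the claimed isomorphism $J(\fm)/P(\fm) \xrightarrow{\sim} \Gal(K^\fm/K)$ with $\fp \mapsto \Frob_\fp$.

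The genuinely hard step --- the content of class field theory itself, rather than the bookkeeping above --- is the reciprocity law underlying the global Artin map: that the map assembled prime-by-prime from local reciprocity is trivial on the principal ideles $K^\times$, and that the norm index equals $[L:K]$. Establishing this requires the two fundamental inequalities bounding the norm index above and below by $[L:K]$, one proved analytically via the pole and nonvanishing of Hecke $L$-functions at $s=1$ and the other cohomologically via the Herbrand quotient of the unit idele class group; equivalently, one computes the fundamental class in $H^2(\Gal(L/K), C_L)$ and applies Tate's theorem. Since \cite[III.9.2]{N-KKT} supplies all of this, in the present paper the theorem is used as a black box, and the only thing that really needs checking in situ is the identification of $C_K/U_\fm$ with the narrow ray class group.
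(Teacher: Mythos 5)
The paper offers no proof of this theorem at all: it is stated purely as a citation to \cite[III.9.2]{N-KKT} and used as a black box, exactly as you anticipate in your final paragraph. Your outline of the standard idelic derivation --- identifying the narrow ray class group $J(\fm)/P(\fm)$ with $C_K/U_\fm$, invoking the existence theorem to produce $K^\fm$, and using the compatibility of the global Artin map with Frobenius at unramified primes --- is correct and consistent with the cited source, so there is nothing to fault and nothing further to compare.
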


The Artin reciprocity law thus allows us to obtain from any Dirichlet character $\xi$ of modulus $\fm$ for~$K$ a character $\chi$ of $\Gal(L/K)$ for any Galois extension $L/K$ containing $K^\fm$ via
\begin{equation}\label{eq:Frob}
\chi \colon \Gal(L/K) \twoheadrightarrow \Gal(K^\fm/K) \xrightarrow{\left( \frac{ \left. K^\fm \middle| K \right. }{ \cdot } \right)^{-1}} J(\fm)/P(\fm) \xrightarrow{\xi} \CC^\times \textnormal{ and }\chi(\Frob_\fp) = \xi(\fp)
\end{equation}
for any prime $\fp$ of~$K$.

Moreover, any character of the Galois group of a Galois extension $L$ of $K$ can be obtained from some Dirichlet character of $K$ in this way because any abelian extension of $K$ is contained in some $K^\fm$ by a generalisation of the theorem of Kronecker--Weber (\cite[III.7.9]{N-KKT}).

If $K$ is totally real and $L$ totally imaginary, then $\Gal(L/K)$ contains a complex conjugation $c_\tau$ for each embedding $\tau: K \hookrightarrow \RR$ (which is uniquely defined up to conjugation in $\Gal(L/K)$ and consequently unique in any abelian quotient of $\Gal(L/K)$). Via the inverse Artin reciprocity map, $c_\tau$ corresponds to an ideal class, which we denote $\fc_\tau$.
We have $\xi(\fc_\tau) = \chi(c_\tau) = (-1)^{p_\tau}$. This can, for instance, be seen by viewing the Dirichlet character as a character of the idèle class group (\cite[VII.6.14]{N-AZT}) and using the definition of the Artin reciprocity map for infinite places (\cite[\S II.5]{N-KKT}).

The following well-known theorem establishes the existence of a Galois representation attached to a theta series of weight one.
\begin{thm}\label{thm:main}
Let $\Theta_{1}(K, \xi, z)$ be a theta series of weight one coming from Theorem~\ref{thm 2}.
Let $L/\QQ$ be any Galois extension containing $K^\fm$ and let $\chi$ be the corresponding character of $\Gal(L/K)$ obtained from $\xi$ via the Artin reciprocity map.

Then the Galois representation
\[
\rho = \Ind_{\Gal(L/K)}^{\Gal(L/\QQ)}(\chi)  \colon \Gal(L/\QQ) \to \GL_2(\CC)
\]
is attached to $\Theta_{1}(K, \xi, z)$ in the sense that $\tr(\rho(\Frob_p)) = a_p$ for all primes~$p$ coprime to~$|D|N(\fm)$.
Moreover, the representation $\rho$ is odd, {\it i.e.} the determinant of complex conjugation equals~$-1$.
\end{thm}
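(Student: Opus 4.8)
The plan is to read the trace off the explicit matrix description \eqref{eq:induced}, organised by the splitting behaviour of $p$, and to establish oddness by a separate analysis of complex conjugation in the imaginary and the real case. Throughout set $G = \Gal(L/\QQ)$ and $H = \Gal(L/K)$, and recall from \eqref{eq:Frob} that $\chi(\Frob_\fp) = \xi(\fp)$.

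First I would justify that the claim even makes sense: since $\chi$ factors through $\Gal(K^\fm/K)$, the representation $\rho$ factors through $\Gal(\tilde M/\QQ)$, where $\tilde M$ is the Galois closure of $K^\fm$ over $\QQ$; as $K/\QQ$ is unramified outside $D$ and $K^\fm/K$ outside $\fm$, the field $\tilde M$ is unramified over $\QQ$ outside the primes dividing $|D|N(\fm)$, so for such $p$ the conjugacy class of $\Frob_p$ is well-defined and $\tr(\rho(\Frob_p))$ depends only on it. A prime $p \nmid |D|N(\fm)$ is unramified in $K$, hence either inert or split, and $\Frob_p \in H$ precisely when it acts trivially on $K$, i.e.\ when $p$ splits. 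If $p$ is inert, then $\Frob_p \notin H$, so $\rho(\Frob_p)$ is anti-diagonal by \eqref{eq:induced} and $\tr(\rho(\Frob_p)) = 0 = a_p$ by Lemma~\ref{lemma:7}(a). If $p$ splits as $\fp_1\fp_2$, then a choice of prime of $L$ above $\fp_1$ identifies $\Frob_p$ with $\Frob_{\fp_1} \in H$, while conjugation by any $\sigma \in G\setminus H$ interchanges $\fp_1$ and $\fp_2$ and hence sends $\Frob_{\fp_1}$ to $\Frob_{\fp_2}$; thus \eqref{eq:induced} and \eqref{eq:Frob} give
\[
\tr(\rho(\Frob_p)) = \chi(\Frob_{\fp_1}) + \chi^\sigma(\Frob_{\fp_1}) = \xi(\fp_1) + \xi(\fp_2) = a_p
\]
by Lemma~\ref{lemma:7}(c). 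This settles the trace identity.

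For oddness, let $c \in G$ be a complex conjugation; it is an involution and, as $L$ is totally imaginary, it is nontrivial. If $K$ is imaginary, then $c$ moves $K$, so $c \notin H$ and $\rho(c) = \mat{0}{b}{a}{0}$ is anti-diagonal by \eqref{eq:induced}; then $\rho(c)^2 = \mat{ab}{0}{0}{ab} = \rho(c^2) = \rho(1)$ forces $ab = 1$ and hence $\det(\rho(c)) = -ab = -1$. If $K$ is real, then $c$ fixes $K$, so $c \in H$ and $\rho(c)$ is diagonal with $\det(\rho(c)) = \chi(c)\chi^\sigma(c)$. Here I would take $c = c_{\tau_1}$ to be attached to an embedding $L \hookrightarrow \CC$ extending the real embedding $\tau_1$ of $K$; conjugation by $\sigma \in G\setminus H$ then carries $c_{\tau_1}$ to a complex conjugation attached to the second real embedding $\tau_2 = \tau_1\circ(\sigma|_K)$, so that $\chi^\sigma(c_{\tau_1}) = \chi(c_{\tau_2})$ in the abelian quotient through which $\chi$ factors. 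Using the identity $\chi(c_\tau) = (-1)^{p_\tau}$ recorded before the theorem and the hypothesis of Theorem~\ref{thm 2} that exactly one of $p_{\tau_1}, p_{\tau_2}$ equals $1$,
\[
\det(\rho(c)) = \chi(c_{\tau_1})\chi(c_{\tau_2}) = (-1)^{p_{\tau_1}+p_{\tau_2}} = -1 .
\]
I expect this real-place bookkeeping to be the only delicate step: one must check that conjugation by $\sigma$ genuinely permutes the two complex conjugations $c_{\tau_1}, c_{\tau_2}$ and that each value $\chi(c_{\tau_i})$ is unambiguous — which holds because $c_\tau$ is canonical in every abelian quotient of $\Gal(L/K)$ — so that the two diagonal entries of $\rho(c)$ are exactly $(-1)^{p_{\tau_1}}$ and $(-1)^{p_{\tau_2}}$.
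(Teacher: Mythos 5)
Your proof is correct and follows essentially the same route as the paper's: the same inert/split case analysis via \eqref{eq:induced}, \eqref{eq:Frob} and Lemma~\ref{lemma:7} for the trace identity, and the same imaginary/real dichotomy for oddness (an anti-diagonal involution in the imaginary case, and $\chi(c_\tau)=(-1)^{p_\tau}$ together with the hypothesis that exactly one $p_\tau$ equals~$1$ in the real case). The only differences are cosmetic: you add an explicit well-definedness check for $\Frob_p$ (left implicit in the paper) and omit the paper's opening remark that $\rho$ is irreducible by Mackey's criterion, which the stated claim does not require.
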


\begin{proof}
The assumptions on $\xi$ imply that $\chi$ is different from its conjugate, when conjugated by any $\sigma \in \Gal(L/\QQ) \setminus \Gal(L/K)$. Consequently, by Mackey's criterion, $\rho$ is irreducible.
By Chebotarev's Density Theorem, $\rho$ is uniquely determined by the equality $\tr(\rho(\Frob_p)) = a_p$ for all primes~$p$ coprime to~$|D|N(\fm)$. We notice that such $p$ are either split or inert in~$K$, and handle these two cases one by one.

Suppose that $p$ is inert in~$K$. Then by Lemma~\ref{lemma:7}, we find $a_p=0$. Under the same assumption, $\Frob_p \in \Gal(L/\QQ) \setminus \Gal(L/K)$, whence \eqref{eq:induced} yields $\tr(\rho(\Frob_p)) = 0$.

Now consider that $p$ splits in~$K$, more precisely, $p \cO_K = \fp_1 \fp_2$. Then by Lemma~\ref{lemma:7}, we find $a_p=\xi(\fp_1) + \xi(\fp_2)$. On the Galois side, we have $\Frob_p \in \Gal(L/K)$ and we can assume without loss of generality $\Frob_p = \Frob_{\fp_1}$, whence $\sigma^{-1}\circ \Frob_p \circ \sigma = \Frob_{\fp_2}$. We thus find from \eqref{eq:induced} and \eqref{eq:Frob}
\[ \tr(\rho(\Frob_p)) = \chi(\Frob_{\fp_1}) + \chi(\Frob_{\fp_2}) = \xi(\fp_1) + \xi(\fp_2).\]

If $K$ is imaginary, then complex conjugation does not fix it; consequently, the conjugacy class of complex conjugation $c \in \Gal(L/\QQ)$ does not lie in the subgroup $\Gal(L/K)$, whence it is an involution of trace~$0$, implying that its determinant is~$-1$.
If $K$ is real, then for each embedding $\tau: K \to \RR$, we have a complex conjugation $c_\tau \in \Gal(L/K)$.
Moreover, $\sigma^{-1} \circ c_\tau \circ \sigma$ is a complex conjugation $c_{\tau \sigma}$ for the other embedding $\tau \sigma$, whence without loss of generality $\chi(c) = \chi(c_\tau) = \xi(\fc_\tau) = (-1)^{p_\tau}$ and $\chi^\sigma(c_\tau) = \chi(c_{\tau \sigma}) = (-1)^{p_{\tau \sigma}}$. This finally yields $\det(\rho(c)) = (-1)^{p_\tau + p_{\tau \sigma}} = -1$ by the condition on the $p_\tau$.
\end{proof}

\section{Köhler's conjecture}

In this section, we prove a slightly modified version of Köhler's conjecture (\cite[\S4 Conjecture]{Koehler-article}).

\begin{thm}\label{thm:conj}
Let $\Theta_1(K, \xi, z)$ be a Hecke theta series of weight $1$ on a quadratic field $K$ and with a Dirichlet character $\xi$ for~$K$.
Consider the following statements.
\begin{enumerate}[(A)]
    \item\label{thm:conj:A} There are a second quadratic field $K' \neq K$ and a Dirichlet character $\xi'$ for $K'$ such that we have the identity
    \[
        \Theta_1(K, \xi, z) \;=\; \Theta_1(K', \xi', z).
    \]
    
    \item\label{thm:conj:B} The character $\xi$ has the following properties:
    \begin{itemize}
        \item[(2)] The character $\xi$ is not induced from a Dirichlet character of~$\QQ$ through the norm. So, $\Theta_1(K, \xi, z)$ is a cusp form.
        \item[(3)] Let $\fp$ be a prime ideal in $\cO_{K}$, $\sigma(\fp)$ its conjugate and suppose that $\fp$ and $\sigma(\fp)$ are both relatively prime to the modulus of~$\xi$. Then we have
              \[
                  \xi(\fp) = \xi(\sigma(\fp)) \quad \textrm{or} \quad \xi(\fp) = -\xi(\sigma(\fp)).
              \]
    \end{itemize}

    \item\label{thm:conj:C} There are two quadratic fields $K' \neq K''$ different from~$K$ and Dirichlet characters $\xi'$ for $K'$ and $\xi''$ for $K''$
    such that we have
\[
   \Theta_1(K, \xi, z) \;=\; \Theta_1(K', \xi', z) \;=\; \Theta_1(K'', \xi'', z).
\]
\end{enumerate}

The statements \eqref{thm:conj:A} and \eqref{thm:conj:C} are equivalent.
Furthermore, statement \eqref{thm:conj:A} implies \eqref{thm:conj:B}.
If $\xi$ is assumed to be primitive, then all three statements are equivalent.
\end{thm}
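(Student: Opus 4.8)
The plan is to deduce the theorem from its representation-theoretic shadow, Proposition~\ref{prop:rep}, through the Galois representation furnished by Theorem~\ref{thm:main}. Let $L/\QQ$ be a Galois extension through which the representation $\rho$ attached to $\Theta_1(K,\xi,z)$ factors, set $G=\Gal(L/\QQ)$ and $H=\Gal(L/K)$, and let $\chi$ be the character of $H$ attached to $\xi$ by \eqref{eq:Frob}. By \eqref{eq:induced} the matrix $\rho(g)$ is anti-diagonal, hence $\ne I$, for $g\notin H$, so $\ker(\rho)\subseteq H$; after replacing $L$ by the fixed field of $\ker(\rho)$ I may assume $\rho$ faithful and $G=\Image(\rho)$, which is exactly the setting of Proposition~\ref{prop:rep}. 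The same anti-diagonality shows that any quadratic field $K'$ from which $\rho$ is induced satisfies $\ker(\rho)\subseteq\Gal(L/K')$ and hence lies in $L$, corresponding to an index-$2$ subgroup $H'=\Gal(L/K')$ with inducing character $\chi'$ matching, via \eqref{eq:Frob}, a Dirichlet character $\xi'$ for $K'$. Under this dictionary \eqref{thm:conj:A} becomes (\ref{prop:rep:A}'), \eqref{thm:conj:C} becomes (\ref{prop:rep:F}'), and the two clauses of \eqref{thm:conj:B} become irreducibility of $\rho$ together with (\ref{prop:rep:B}').

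Next I would set up the two-way bridge between theta series and representations. If two such theta series coincide as $q$-series, then their coefficients $a_p$ agree at all but finitely many primes, so by Theorem~\ref{thm:main} the traces $\tr(\rho(\Frob_p))$ agree at almost all $p$; by Chebotarev's density theorem and the fact that a semisimple complex representation is determined by its character, the two attached representations are isomorphic. Conversely, if the inducing characters are primitive then the theta series are newforms by Theorem~\ref{thm 2}, and an isomorphism of the attached representations forces equality of the $q$-series by strong multiplicity one. Primitivity is used only for this converse.

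Granting the bridge, \eqref{thm:conj:A}$\Rightarrow$\eqref{thm:conj:B} is immediate: \eqref{thm:conj:A} gives $\rho\cong\Ind_{H'}^G(\chi')$ with $H'\ne H$, which is (\ref{prop:rep:A}'). Since a theta series has $a_p=0$ at every inert prime, a reducible $\rho$ would be a sum of two distinct characters and so be induced from a unique index-$2$ subgroup; thus \eqref{thm:conj:A} forces $\rho$ irreducible, i.e.\ clause~(2), and Proposition~\ref{prop:rep} then yields (\ref{prop:rep:B}'). Evaluating $\chi(h)=\pm\chi^\sigma(h)$ at $h=\Frob_\fp$ and using $\chi(\Frob_\fp)=\xi(\fp)$, $\chi^\sigma(\Frob_\fp)=\xi(\sigma(\fp))$ and Chebotarev gives clause~(3). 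The implication \eqref{thm:conj:C}$\Rightarrow$\eqref{thm:conj:A} is trivial; for \eqref{thm:conj:A}$\Rightarrow$\eqref{thm:conj:C}, Proposition~\ref{prop:rep} upgrades (\ref{prop:rep:A}') to (\ref{prop:rep:F}'), producing a second subgroup $H''$ distinct from $H$ and $H'$ and hence a third field $K''$. By \eqref{eq:trdet} the traces and determinants of $\rho$ depend only on the restriction of $\chi$ to the centre $Z(G)$, which is common to all three subgroups, so the three theta series agree at every good prime; matching the remaining Euler factors is the point deferred below. Finally, under the primitivity hypothesis \eqref{thm:conj:B}$\Rightarrow$\eqref{thm:conj:A} follows by running Proposition~\ref{prop:rep} from (\ref{prop:rep:B}') to (\ref{prop:rep:A}'), taking $\xi'$ primitive, and applying strong multiplicity one.

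The main obstacle is precisely the passage from equality of full $q$-series to isomorphism of Galois representations, which sees only good primes. For primitive characters strong multiplicity one settles this at once, so the genuinely delicate case is the unconditional \eqref{thm:conj:A}$\Rightarrow$\eqref{thm:conj:C} for imprimitive $\xi$. Having obtained the newform common to $K,K',K''$, I must still produce a Dirichlet character $\xi''$ for $K''$ whose theta series reproduces the bad Euler factors of $\Theta_1(K,\xi,z)$ as well: since the latter arises from its newform by deleting the Euler factors at the primes dividing the modulus but not the conductor, I would take the modulus of $\xi''$ divisible by exactly the primes of $K''$ above those rational primes and verify through Lemma~\ref{lemma:7} that the same factors are deleted. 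Establishing this compatibility prime by prime is the crux; everything else is a direct transcription of Proposition~\ref{prop:rep}.
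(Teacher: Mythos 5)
Your overall strategy is the same as the paper's: translate both sides into Galois representations via Theorem~\ref{thm:main}, use Chebotarev plus character theory to pass from agreement of almost all $a_p$ to isomorphism of the induced representations, invoke Proposition~\ref{prop:rep} for the group theory, and come back to equality of theta series via uniqueness of newforms when the characters are primitive. Up to and including the equivalence of \eqref{thm:conj:A}, \eqref{thm:conj:B}, \eqref{thm:conj:C} in the primitive case, your argument matches the paper's proof.

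However, there is a genuine gap at exactly the point you defer, namely \eqref{thm:conj:A} $\Rightarrow$ \eqref{thm:conj:C} for imprimitive $\xi$, and the recipe you sketch for closing it would fail. Your premise that $\Theta_1(K,\xi,z)$ ``arises from its newform by deleting the Euler factors at the primes dividing the modulus but not the conductor'' is false: if $p\cO_K = \fp_1\fp_2$ is split and the modulus of $\xi$ is divisible by $\fp_1$ but not by $\fp_2$, then Lemma~\ref{lemma:7} gives $a_{p^r} = \xi(\fp_2)^r = \tilde{\xi}(\fp_2)^r \neq 0$, so the Euler factor at $p$ is not deleted but replaced by a degree-one factor (this situation is exactly the one exploited in the paper's remarks following the theorem, so it is not vacuous). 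Consequently your construction --- make the modulus of $\xi''$ divisible by \emph{all} primes of $K''$ above the bad rational primes --- yields $a_p'' = 0 \neq a_p$ at every such prime, and the resulting $q$-series do not agree. The paper's proof does real work precisely here: it distinguishes the cases $a_p = \tilde{a}_p$, $0 = a_p \neq \tilde{a}_p$, and $0 \neq a_p \neq \tilde{a}_p$. In the last case one must put exactly \emph{one} of the two primes of $K''$ above $p$ into the modulus of $\xi''$, and to see that this produces the correct coefficient one needs two further facts: (i) $p$ splits in $K''$, which the paper proves by ruling out that $p$ is inert or ramified in $K'$ through comparison with the theta series of $\xi'$ and $\tilde{\xi}'$; and (ii) all three primitive characters take the same value $\zeta$ at all primes above $p$, because the relevant Frobenius elements lie in the subgroup fixing $KK'$, where Proposition~\ref{prop:rep} forces the three Galois characters to coincide; then $\tilde{a}_p = 2\zeta$ and $a_p = \zeta$, so dropping one summand gives $a_p'' = \zeta = a_p$ as required. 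None of this follows from Lemma~\ref{lemma:7} alone, and your proposal as written does not contain it.
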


\begin{rem}
\begin{enumerate}[(a)]
\item We chose to follow Köhler's original formulation quite closely in Theorem~\ref{thm:conj} but made some notational changes (e.g.\ Dirichlet characters vs.\ Hecke characters) and suppressed the explicit mentions of the moduli of the characters and the discriminants of the quadratic fields.

\item In Köhler's original formulation, there was an additional item (1) in \eqref{thm:conj:B}, which we removed as it need not always hold because our computation of the images of Galois representations shows that the characters can have arbitrarily large order.

\item If $\xi$ is not primitive, the implication \eqref{thm:conj:B} $\Rightarrow$ \eqref{thm:conj:A} need not hold.

To construct a counter example, let $\tilde{\xi}$ be a primitive Dirichlet character satisfying both conditions of~\eqref{thm:conj:B}. By the proved equivalence in the primitive case, we obtain quadratic fields $K'$, $K''$ and primitive Dirichlet characters $\tilde{\xi}'$, $\tilde{\xi}''$ for which \eqref{thm:conj:A} holds. We take $p$ to be a prime coprime to the conductor that is split in~$K$, but inert in~$K'$. The latter condition implies that the $a_p$-coefficient of $\Theta_1(K',\tilde{\xi}',z)$ equals~$0$, and the same holds true for $\Theta_1(K',\xi',z)$ for any Dirichlet character $\xi'$ with primitive character $\tilde{\xi}'$. For a counter example, it thus suffices to choose a modulus $\fm$ for~$K$ containing exactly one of the two primes of~$K$ above~$p$ because then $a_p(\Theta_1(K,\xi,z)) = \xi(\sigma(\fp)) \neq 0$.

\item The previous item might suggest that one should ask condition (3) in \eqref{thm:conj:B} for all primes~$\fp$, and not only those coprime to the modulus of~$\xi$. However, this condition is not implied by \eqref{thm:conj:A}.

The reason is the following. Let $\tilde{\xi}$ be a primitive Dirichlet character for~$K$ of conductor~$\ff$ satisfying all equivalent conditions of Theorem~\ref{thm:conj} and let $p$ be a prime which is coprime to~$\ff$, split in $K$ and inert in~$K'$.
If we now let $\xi$ be the non-primitive Dirichlet character obtained from $\tilde{\xi}$ by defining it on the modulus $\fm = \ff \fp$, where $\fp$ is one of the primes of~$K$ above~$p$, then $a_p(\Theta_1(K,\xi,z))=\xi(\sigma(\fp)) \neq 0$, whereas for every choice of Dirichlet character $\xi'$ on~$K'$ we have $a_p(\Theta_1(K',\xi',z))=0$.

\item In the case of $\xi$ (and, consequently, $\xi'$, $\xi''$) being primitive, the levels of the three Hecke theta series $\Theta_1(K, \xi, z) \;=\; \Theta_1(K', \xi', z) \;=\; \Theta_1(K'', \xi'', z)$ are equal to the conductor of $\rho\cong\rho{'}\cong\rho^{''}$, which by \cite[\S3]{taguchi} are equal to $N(\mathfrak{m}) \cdot D = N(\mathfrak{m}') \cdot D^{'} = N(\mathfrak{m}'') \cdot D^{''}$, where $D$, $D^{'}$ and $D^{''}$ are the discriminants of $K$, $K'$ and $K''$, and $\fm$, $\fm'$, $\fm''$ are the conductors of $\xi$, $\xi'$, $\xi''$, respectively.

The equalities of Hecke theta series for non-primitive Dirichlet characters are meant as equalities of functions; we do not specify the levels of the theta series individually.

\item In the case of \eqref{thm:conj:C}, exactly one of the three quadratic fields $K$, $K'$, $K''$ is real.

Indeed, if two among these fields were real, their composite would also be real. Consequently, any complex conjugation would fix this composite and by \eqref{eq:trdet} it would have determinant~$1$. This would imply that the Galois representation $\rho$ from Theorem~\ref{thm:main} is even, a contradiction.

There are cases when such a Galois representation $\rho$ is induced from three distinct real quadratic fields (for instance with image $D_4$ by number field 4.4.725.1 or with image $Q_8$ by number field 8.8.12230590464.1 in \cite{lmfdb}). However, it does not come from any Hecke theta series of weight~$1$.

\end{enumerate}
\end{rem}

\begin{proof}[Proof of Theorem~\ref{thm:conj}.]
The key idea is to reduce to the analogous statements about Galois representations in Proposition~\ref{prop:rep}.
To carry out the transition between Galois representations and theta series, we adopt the following notation.
Let $L$ be a finite Galois extension of $\QQ$, which need be `big enough' in a sense that is specified in each case.
Relative to this choice of~$L$, we set $G=\Gal(L/\QQ)$ and $H=\Gal(L/K)$ and, if applicable, $H' = \Gal(L/K')$ and $H'' = \Gal(L/K'')$.
If we deal with a Dirichlet character $\xi$ of modulus~$\fm$ for~$K$, we assume that $L$ contains $K^\fm$, the narrow ray class field of~$K$ of modulus~$\fm$ and we associate to it the Galois character $\chi: H \to \CC^\times$ as explained in \S\ref{sec:galrep}.
We proceed similarly for $\xi'$ and $\xi''$, if applicable.

\underline{(\ref{thm:conj:B}) $\Leftrightarrow$ (\ref{prop:rep:B}'):} Condition (2) translates to $\chi^\sigma \neq \chi$ and (3) is equivalent to $\chi^\sigma(h) = \pm \chi(h)$ for $h \in H$ as every $h \in H$ is equal to some $\Frob_\fp$ by Chebotarev's density theorem.

\underline{(\ref{thm:conj:A}) $\Rightarrow$ (\ref{prop:rep:A}'):} In this case, we assume that $L$ also contains the narrow ray class field of $K'$ of modulus~$\fm'$, where $\fm'$ is the modulus of~$\xi'$, so that we obtain the Galois character $\chi':H' \to \CC^\times$. The two Galois representations $\rho = \Ind_H^G(\chi)$ and $\rho' = \Ind_{H'}^G(\chi')$ have the same traces at $\Frob_p$ for all primes~$p$ not dividing the levels of $\Theta_1(K,\xi,z)$ and $\Theta_1(K',\xi',z)$ because these traces are the $a_p$-coefficients of the two theta series, which are assumed to be the same. Consequently $\rho$ and $\rho'$ are isomorphic, establishing (\ref{prop:rep:A}').
Using Proposition~\ref{prop:rep} and the triviality of (\ref{thm:conj:C}) $\Rightarrow$ (\ref{thm:conj:A}), we thus have the series of implications
\[ \textnormal{(\ref{thm:conj:C})} \Rightarrow \textnormal{(\ref{thm:conj:A})} \Rightarrow \textnormal{(\ref{prop:rep:A}')} \Leftrightarrow \textnormal{(\ref{prop:rep:C}')} \Leftrightarrow  \textnormal{(\ref{prop:rep:B}')} \Leftrightarrow \textnormal{(\ref{thm:conj:B}).}\]

\underline{(\ref{prop:rep:A}') $\Rightarrow$ (\ref{thm:conj:A}) for $\xi$ primitive:}
The subgroup $H' \subset G$ of index~$2$ corresponds to a quadratic field~$K' \neq K$ and the character $\chi':H' \to \CC^\times$ is thus the character of some abelian extension of~$K'$. We associate with it its unique primitive Dirichlet character~$\xi'$, from which we obtain the theta series $\Theta_1(K',\xi',z)$, which is a newform, exactly as $\Theta_1(K,\xi,z)$ due to the primitivity of the characters.
The equivalence of the Galois representations $\Ind_H^G(\chi)$ and $\Ind_{H'}^G(\chi')$ means that the $a_p$-coefficients of both theta series agree for all $p$ except possibly finitely many. However, those suffice to determine newforms uniquely (\cite[Theorem 13.3.9]{CS}), whence we conclude (\ref{thm:conj:A}).
Exactly the same arguments applied to $H''$ show (\ref{prop:rep:C}') $\Rightarrow$ (\ref{thm:conj:C}) for primitive~$\xi$, establishing the equivalence of \eqref{thm:conj:A}, \eqref{thm:conj:B} and \eqref{thm:conj:C} in this case.

\underline{(\ref{thm:conj:A}) $\Rightarrow$ (\ref{thm:conj:C}):}
Denote by $\tilde{\xi}$ and $\tilde{\xi}'$ the primitive characters of $\xi$ and $\xi'$, respectively, and let $\ff$ and $\ff'$ be their conductors.
By the established equivalence for primitive characters we thus have a primitive Dirichlet character $\tilde{\xi}''$ on the quadratic field~$K''$ such that the three theta series $\Theta_1(K,\tilde{\xi},z)$, $\Theta_1(K',\tilde{\xi}',z)$ and $\Theta_1(K'',\tilde{\xi}'',z)$ are equal. Let $\ff''$ be the conductor of $\tilde{\xi}''$. We will now define an ideal $\fa''$ of $K''$ such that the theta series $\Theta_1(K'',\xi'',z)$ equals $\Theta_1(K,\xi,z) = \Theta_1(K',\xi',z)$ for the Dirichlet character $\xi''$ of modulus~$\fm'' = \ff'' \fa''$ obtained from $\tilde{\xi}''$.

For a prime~$p$, we write $\tilde{a}_p =a_p(\Theta_1(K,\tilde{\xi},z))$, $a_p = a_p(\Theta_1(K,\xi,z))$ and $a_p'' = a_p(\Theta_1(K'',\xi'',z))$.
We proceed by case distinctions and base our arguments on a simple observation derived from~\eqref{eq:an}: If non-zero, then $a_p$ and $\tilde{a}_p$ are either a root of unity or a sum of two roots of unity. Moreover, in all cases $a_p$ can be obtained from $\tilde{a}_p$ by dropping a subset of the summands because $\xi(\fp)=0$ or $\xi(\fp)=\tilde{\xi}(\fp)$ for all primes $\fp$ of~$K$ above~$p$.

\noindent\underline{Assume $a_p = \tilde{a}_p$:} Then we take $\fa''$ coprime to~$p$, so that $\xi''$ and $\tilde{\xi}''$ have the same values at the prime(s) of $K''$ above~$p$, ensuring $a_p=a_p''$.

\noindent\underline{Assume $0 = a_p \neq \tilde{a}_p$:} In that case, we let $p\cO_{K''}$ divide~$\fa''$, making sure that $\xi''$ evaluates to~$0$ at the prime(s) of $K''$ above~$p$, whence $a_p''=0$.

\noindent\underline{Assume $0 \neq a_p \neq \tilde{a}_p$:} In this case, $a_p$ is equal to a root of unity, whereas $\tilde{a}_p$ is equal to a sum of two roots of unity. The latter implies that $p$ is split in~$K$ and coprime to the conductors of all three primitive Dirichlet characters $\tilde{\xi}, \tilde{\xi}',\tilde{\xi}''$.
Furthermore, $p$ cannot be inert in~$K'$, as this would imply $a_p=0$ via the theta series of~$\xi'$. Neither can $p$ be ramified in~$K'$, as that would imply that $\tilde{a}_p$ is equal to a root of unity via the theta series of~$\tilde{\xi}'$. From the splitting of $p$ in $K$ and $K'$, we conclude the splitting of~$p$ in $K''$.
Proposition~\ref{prop:rep} tells us that the Galois characters corresponding to $\tilde{\xi}, \tilde{\xi}',\tilde{\xi}''$ have the same restriction to the Galois group of $KK'$. Consequently, the three primitive characters have the same value $\zeta$ at both primes above~$p$ and so $\tilde{a}_p = 2\zeta$ and $a_p = \zeta$. It now suffices to choose one prime $\fp''$ of $K''$ above~$p$ and to let $\fp''$ divide~$\fa''$, but to ensure that the conjugated prime does not divide~$\fa''$. Then $a_p''=\zeta$.
\end{proof}

\bibliography{Bib-Koehler}
\bibliographystyle{alpha}

\bigskip

\noindent 
Mahima Kumar, Gabor Wiese\\
Department of Mathematics\\
University of Luxembourg \\
Luxembourg \\
\url{koomar.mahima@gmail.com},
\url{gabor.wiese@uni.lu}\\

\end{document}